\renewcommand\section{\@startsection {section}{1}{\z@}
{-30pt \@plus -1ex \@minus -.2ex}
{2.3ex \@plus.2ex}
{\normalfont\normalsize\bfseries}}
\renewcommand\subsection{\@startsection{subsection}{2}{\z@}
{-3.25ex\@plus -1ex \@minus -.2ex}
{1.5ex \@plus .2ex}
{\normalfont\normalsize\bfseries}}
\renewcommand{\@seccntformat}[1]{\csname the#1\endcsname. }
\newcommand{\defcolor}[1]{{\color{blue}#1}}
\newcommand{\demph}[1]{\textbf{\defcolor{{\sl #1}}}} 
\newtheorem{theorem}{Theorem}
\newtheorem{Summary}[theorem]{Summary}
\numberwithin{theorem}{section}
\newtheorem{proposition}[theorem]{Proposition}
\newtheorem{corollary}[theorem]{Corollary}
\theoremstyle{definition}
\newtheorem{definition}[theorem]{Definition}
\newtheorem{remark}[theorem]{Remark}
\newtheorem{example}[theorem]{Example}
\begin{document}

\title{The critical curvature degree of an algebraic variety}

\author{Emil Horobe\c{t}}
\date{}

\maketitle
\begin{abstract}
In this article we study the complexity involved in the computation of the reach in arbitrary dimension and in particular the computation of the critical spherical curvature points of an arbitrary algebraic variety. We present properties of the critical spherical curvature points as well as an algorithm for computing them.
\end{abstract}

\section{Introduction}

Sampling an object is an important problem when trying to recover information about its structure. Using geometric information such as the bottlenecks and the local reach in article \cite{DiR20} the authors provide bounds
on the density of the sample needed in order to guarantee that the homology of the variety can
be recovered from the sample. If the sample is finer than the reach, then the homology can be recovered.

The reciprocal of the reach, called the condition number of the underlying system of
polynomials, is also a very important indicator, see for instance \cite{Burg1,Burg2}. Here the authors
derive a complexity analysis of an algorithm for computing homology, using condition numbers.

If our sample data is driven by a geometric model, say an algebraic
variety $X$, then the reach can be computed as a minimum of two quantities (see \cite[Lemma $3.3$]{Aamari}), more precisely, one of which is the radius of the narrowest bottleneck of $X$, and the other one is the minimal radius of spherical curvature on $X$ (see the definition of the reach in \cite[Proposition $6.1$]{NSW} in terms of the radius of the osculating sphere and see \cite[Section $4.2$]{Aamari} for a detailed explanation). From now on we will call the spherical curvature simply curvature.

We deal with the complexity involved in the computation of the reach and in particular the computation of locally minimal, maximal, etc. curvature points. An analysis of the bottleneck degree of a variety was done in \cite{DiRocco,Eklund}, but understanding critical curvature points in arbitrary dimension was open till now. A direct method to find the variety of critical radii of curvature for plane curves is presented in \cite[Theorem $4.8$]{MadMad}. 

What the present article intends to do is a different approach. In this article we want to analyse the set of all points on the generalized evolute (or focal locus or Euclidean Distance Discriminant) to a real variety $X$ that correspond to centres of curvature at points with critical curvature (locally minimal, maximal, saddle type, etc.). We construct a variety containing these points and we will call its degree the \demph{Critical Curvature Degree} of $X$. Our findings can be summarized as follows. 

\begin{Summary}	
	For an arbitrary variety $X$ of any degree in any dimension we
	\begin{itemize}
		\item provide an algorithm to compute the critical curvature degree of $X$ (see algorithm in Example~\ref{ellipse}). This degree describes the complexity of finding all the points of critical curvature on the variety (using for example numerical homotopy methods, see for instance \cite{Brei}); 
		\item give examples to show that this degree also describes the "curviness" of $X$, in the sense that the critical curvature degree of a linear space is $0$ (its ED discriminant is empty), of a circle is $2$ (see Example~\ref{circle}), of an ellipse is $4$ (see Example~\ref{ellipse}) and so on;
		\item show that the set of critical curvature points can be zero dimensional (see Example~\ref{ellipse}) or higher (see Example~\ref{ellipsoid});
		\item finally prove that all singular points of the ED discriminant correspond to points in the critical curvature pairs variety (see Theorem~\ref{Main}). This generalizes classical findings (see \cite{Li19, Port}).
	\end{itemize}
\end{Summary}

\section{The offset discriminant}
To start we need to define the radius- and centre of curvature of a variety at a given regular (not singular) point. In order to do so we adapt an approach via level curves and evolutes or more generally via offset hypersurfaces and offset discriminants. In this section we recall the notion and properties of offset hypersurfaces from \cite{HW19}.

Let us consider $X_{\mathbb{R}}\subset \mathbb{R}^n$ a proper real variety and $\epsilon$ a fixed a real number. In order to use techniques form algebraic geometry we will consider the complexification of $X_{\mathbb{R}}$, that is the complex variety $X_{\mathbb{C}}$ in $\mathbb{C}^n$ defined by the same ideal as $X_{\mathbb{R}}$ and we also consider $\epsilon$ to be a complex number. Given two vectors $x,y\in \mathbb{C}^n$ we define the complexified squared Euclidean distance function $d: \mathbb{C}^n\times \mathbb{C}^n\to\mathbb{C}$, such that $\displaystyle{d(x,y)=\sum_{i=1}^n(x_i-y_i)^2}$. This is a complex valued function, which for real entries provides the real squared Euclidean distance of the two real entries. For any given $y_0\in \mathbb{C}^n$ we define the \textit{$\epsilon$-ball centered around $y_0\in \mathbb{C}^n$} to be $B_{\epsilon}(y_0)=\{x\in \mathbb{C}^n, \text{ s.t. } d(x,y_0)=\epsilon\}$.

The \textit{$\epsilon$-offset hypersurface} is defined to the closure of the union of the centres of $\epsilon$-balls that intersect the variety $X$ non-transversally at some regular (non singular) point $x\in X_{reg}$. For a fixed $\epsilon$ we denote the $\epsilon$-offset hypersurface by $\mathcal{O}_{\epsilon}(X)$.

\begin{figure}[h]
	\begin{center}
		\vskip -0.2cm
		\includegraphics[scale=1.1]{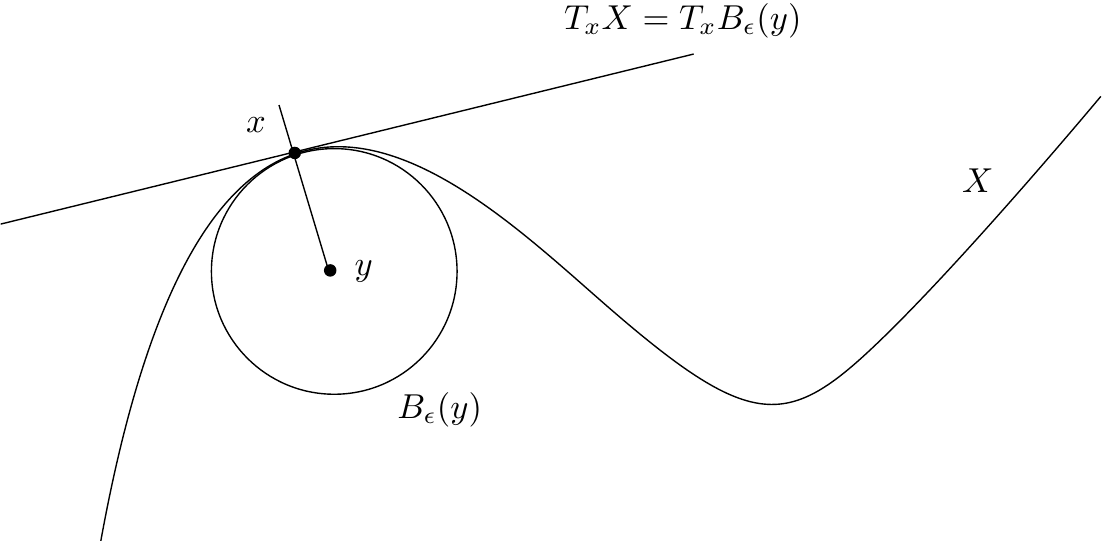}
		\vskip -0.2cm
		\caption{Non-transversal intersection of the variety with the $\epsilon$-ball.}
		\label{Offset hypersurface construction}
	\end{center}
\end{figure}
So if $y\in\mathcal{O}_{\epsilon}(X)$, then there exists an $x\in X_{reg}$, that is a regular point of the variety (non-singular point), such that their squared distance $d(x,y)$ is exactly $\epsilon$ and by the non-transversality $T_x X\subseteq T_x B_{\epsilon}(y)$, that is $x-y\perp T_x X$, where $T_x X$ is the tangent space at $x$ to $X$ and $T_x B_{\epsilon}(y)$ is the tangent space at $x$ to $B_{\epsilon}(y)$.
Now  we consider the closure of the set of all pairs $(x,y)\in \mathbb{C}^n\times \mathbb{C}^n$ such that $d(x,y)=\epsilon$, $x\in X_{reg}$ and $x-y\perp T_x X$. The name of this variety is the \textit{offset correspondence} of $X$ and we denote it by $\mathcal{OC}_{\epsilon}(X)$. This correspondence is a variety in $\mathbb{C}^n\times \mathbb{C}^n$ and is equal to the closure of the intersection
\[\label{offset_equ}
\{(x,y)\in \mathbb{C}^n\times \mathbb{C}^n,\text{ s.t. }x\in X_{reg},\ x-y\perp T_x X\}\cap \{(x,y)\in \mathbb{C}^n\times \mathbb{C}^n,\text{ s.t. } d(x,y)=\epsilon\}.
\]
Observe that the first variety is equal to the Euclidean Distance Degree correspondence, $\mathcal{E}(X)$. This correspondence contains pairs of ``data points" $y\in \mathbb{C}^n$ and corresponding points on the variety $x\in X_{reg}$, such that $x$ is a constrained critical point of the Euclidean distance function $d_{y}(x)=d(x,y)$ (distance from $y$) with respect to the constraint that $x\in X_{reg}$. For more details on this problem we direct the reader to \cite[Section 2]{DHOST16}.
Using the terminology of the Euclidean distance degree problem, we have
\[
\mathcal{OC}_{\epsilon}(X)=\mathcal{E}(X)\cap \{(x,y)\in \mathbb{C}^n\times \mathbb{C}^n,\text{ s.t. } d(x,y)=\epsilon\}.
\]
We have the natural projections $\mathrm{pr}_1: \mathcal{OC}_{\epsilon}(X)\subseteq \mathbb{C}^n\times \mathbb{C}^n \to \mathbb{C}^n$, to the first $n$-tuple of coordinates and $\mathrm{pr}_2: \mathcal{OC}_{\epsilon}(X)\subseteq \mathbb{C}^n\times \mathbb{C}^n \to  \mathbb{C}^n$, to the second $n$-tuple of coordinates. The closure of the image of the first projection is the variety $X$ and the closure of the image of the second projection is the \textit{offset hypersurface} $\mathcal{O}_{\epsilon}(X)$.
\begin{equation}\label{proj}
\begin{tikzcd}
&\mathcal{OC}_{\epsilon}(X) \arrow{ld}[swap]{\mathrm{pr}_{1}}\arrow{rd}{\mathrm{pr}_2}& \\
X\subseteq \mathbb{C}^n & & \mathcal{O}_{\epsilon}(X)\subseteq \mathbb{C}^n
\end{tikzcd}
\end{equation}

We remind the reader that (see \cite[Theorem 4.1]{DHOST16}) the Euclidean Distance correspondence $\mathcal{E}(X)$ is an irreducible variety of dimension $n$ inside $\mathbb{C}^n \times \mathbb{C}^n$.
The first projection extended to $\mathcal{E}(X)$, that is $\mathrm{pr}_1 : \mathcal{E}(X) \to X \subseteq \mathbb{C}_x^n$, is an affine vector bundle over $X_{reg}$, of rank equal to the codimension of $X$
and over generic points the second projection extended to $\mathcal{E}(X)$, that is $\mathrm{pr}_2: \mathcal{E}(X) \to
\mathbb{C}^n$, has finite fibers of cardinality equal to the \textit{Euclidean Distance Degree} (ED degree) of $X$.

The second projection, $\mathrm{pr}_2$, has a branch locus, which will play an important role in our future investigation. This ramification locus is generically a hypersurface in $\mathbb{C}^n$, by the Nagata-Zariski Purity Theorem \cite{Purity2},\cite{Purity1}. The \textit{Euclidean Distance discriminant (ED discriminant)} is the closure of the image of the ramification locus of $\mathrm{pr}_2$, (i.e. the points where the derivative of $\mathrm{pr}_2$ is not of full rank, under the projection $\mathrm{pr}_2$). As in~\cite[Section 7]{DHOST16}, we denote the ED discriminant of the variety $X$ by $\Sigma(X)$.

The offset correspondence is the intersection of the Euclidean Distance correspondence with the hypersurface $\{(x,y)\in \mathbb{C}^n\times \mathbb{C}^n,\text{ s.t. } d(x,y)=\epsilon\}$ in $\mathbb{C}^n\times \mathbb{C}^n$. This intersection is $n-1$ dimensional because $\mathcal{E}(X)$ is not a  subvariety of the latter one (because not all pairs $(x,y)\in \mathcal{E}(X)$ are at $\epsilon$ squared distance from each other). So the offset correspondence, $\mathcal{OC}_{\epsilon}(X)$, is an $n-1$ dimensional variety in $\mathbb{C}^n\times \mathbb{C}^n$. But over a generic point projection $\mathrm{pr}_2$ has finite fibers (of cardinality equal to the ED degree of $X$ by definition), so $\mathcal{O}_{\epsilon}(X)$ is $n-1$ dimensional as well, hence the name offset \textit{hypersurface}. 

Let us suppose that the radical ideal of the offset hypersurface is generated by the polynomial $f_{\epsilon}(y)\in\mathbb{C}[y_1,\ldots,y_n]$, with parameter $\epsilon$. This polynomial is also called the \textit{ED polynomial} in \cite{OS20} and it is the parametric polynomial defining the family of all offset hypersurfaces of $X$ parametrized by the variable $\epsilon$ representing the squared distance. One can consider this as a polynomial in the $\epsilon$ variable as well, so as $f_{\epsilon}(y)\in\mathbb{C}[y_1,\ldots,y_n][\epsilon]$.  Now by computing its $\epsilon$-discriminant one gets $\mathrm{Disc}_{\epsilon}(f)(y)\in\mathbb{C}[y_1,\ldots,y_n]$ which by the definition of the discriminant is a scalar multiple of the resultant of the ED polynomial and its $\epsilon$-derivative, hence it is the defining polynomial of the \textit{envelope of the family of all offset hypersurfaces}. 
This envelope is denoted by $\Delta(X)$ and it is called the \textit{offset discriminant of $X$} (see \cite{HW19}). So we have that
\[\Delta(X)=\{y\in \mathbb{C}^n, \text{ s.t. }\mathrm{Disc}_{\epsilon}(f)(y)=0\}.\] 

This is a hypersurface in $\mathbb{C}^n$ that is tangent to each member of the family of all offset hypersurfaces at some point, and these points of tangency together form the whole envelope. Classically, a point on the envelope can be thought of as the intersection of two infinitesimally adjacent hypersurfaces in the family, meaning the limit of intersections of nearby offset hypersurfaces.

So if $y_0\in\Delta(X)$, then this means that for some $\epsilon$ we have that \[\displaystyle{y_0\in \lim_{\delta\to\epsilon}\mathcal{O}_{\delta}(X)\cap\mathcal{O}_{\epsilon}(X).}\]
This means that there exist a sequence $y_{\delta}\to y_0$, as $\delta\to \epsilon$, such that $y_{\delta}\in \mathcal{O}_{\delta}(X)\cap\mathcal{O}_{\epsilon}(X)$. This means that for every $\delta$ there exists $x_{\delta\epsilon}^1, x_{\delta\epsilon}^2\in X$ such that $d(y_{\delta}, x^1_{\delta \epsilon})=\delta$, $d(y_{\delta}, x^2_{\delta\epsilon})=\epsilon$ and $x_{\delta\epsilon}^1-y_{\delta}\perp T_{x_{\delta\epsilon}^1}X$ and  $x_{\delta\epsilon}^2-y_{\delta}\perp T_{x_{\delta\epsilon}^2}X$. Meaning that $y_{\delta}$ is at the intersection of the two normal lines of $X$, spanned by $x_{\delta\epsilon}^1-y_{\delta}$ and by $x_{\delta\epsilon}^2-y_{\delta}$. 
As $\delta$ goes to $\epsilon$ the sequences $x_{\delta\epsilon}^1$ and $x_{\delta\epsilon}^2$ converge. This is true because $X$ is (algebraically) closed, moreover the limits can not run away to infinity because their distance to $y_0$ is fixed, so we have boundedness as well and hence compactness (and sequential compactness) is fulfilled. And for the limits there are two distinct cases. 

One, if $x_{\delta\epsilon}^1 \to x_{1}$ and $x_{\delta\epsilon}^2 \to x_{2}$, with $x_1\neq x_2$, then $y_0$ is at the intersection of two distinct normal lines with $d(y_0,x_1)=d(y_0,x_2)$, hence being an element of the \textit{bisector hypersurface to $X$} (see for instance \cite{HW19}[Section~$2.3$]) or in other words an element of the symmetry set of $X$ or the union of all Voronoi boundaries, see for instance \cite{Voronoi}. Let us denote the bisector hypersurface by $B(X,X)$.

Two, if $x_{\delta\epsilon}^1, x_{\delta\epsilon}^2 \to x_{0}$, then in the limit $y_0$ will be at the intersection of two "infinitesimally close" normal lines to $X$, and it can be defined to be \demph{the center of curvature} at $x_0$ to $X$ (generalizing the classical definition by Cauchy~\cite{Cau}). 
Of course the closure of this set is also called the ED discriminant and it is denoted by $\Sigma(X)$.
For plane curves it is classical of course that the locus of centres of curvature comprise the evolute of the curve. The above definition generalizes this.

To summarize these two cases we formulate the proposition below, which can be also found in \cite[Proposition 2.5.]{OS20} and in \cite[Proposition 2.13]{HW19}) as well.

\begin{proposition}\label{Split}
Let $X$ be an algebraic variety, then the offset discriminant decomposes into the bisector hypersurface and the ED discriminant of the variety. So we have that\[\Delta(X)=B(X,X)\cup\Sigma(X).\]
\end{proposition}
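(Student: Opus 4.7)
The plan is to interpret $\Delta(X)$ as the locus where the ED polynomial $f_\epsilon(y)$ has a repeated root in $\epsilon$, and then read that condition geometrically in two ways matching the two summands on the right. Concretely, $\Delta(X)$ is cut out by the $\epsilon$-discriminant of $f_\epsilon(y)$, so $y_0\in\Delta(X)$ exactly when $f_{\epsilon}(y_0)\in\CC[\epsilon]$ fails to be separable (up to the leading-coefficient locus, which contributes lower-dimensional components absorbed by taking closures). By construction of the offset hypersurfaces and of the ED correspondence $\mathcal{E}(X)$, the roots of $f_\epsilon(y_0)$ in $\epsilon$ are the squared distances $d(y_0,x)$ as $x\in X_{reg}$ ranges over the ED-critical points of $y_0$, weighted by the intersection multiplicity of the fibre $\mathrm{pr}_2^{-1}(y_0)\cap\mathcal{E}(X)$.

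For the inclusion $\Delta(X)\subseteq B(X,X)\cup\Sigma(X)$ I would formalize the adjacent-intersection argument already sketched in the text. Given $y_0\in\Delta(X)$, produce a sequence $y_\delta\to y_0$ with $y_\delta\in\mathcal{O}_\delta(X)\cap\mathcal{O}_\epsilon(X)$, together with the two families $x^1_{\delta\epsilon}, x^2_{\delta\epsilon}\in X$ satisfying the stated orthogonality and squared-distance conditions. Since $X$ is closed and the squared distances from the $x^i_{\delta\epsilon}$ to $y_0$ are bounded, sequential compactness yields convergent subsequences with limits $x_1,x_2\in X$. If $x_1\neq x_2$, then $y_0$ lies on two distinct normal lines at equal squared distance, whence $y_0\in B(X,X)$; if instead $x_1=x_2$, the two ED-critical points of $y_\delta$ coalesce, exhibiting $y_0$ as a branch value of $\mathrm{pr}_2|_{\mathcal{E}(X)}$, so $y_0\in\Sigma(X)$.

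For the reverse inclusion I would argue that generic points of each of $B(X,X)$ and of $\Sigma(X)$ already force $f_\epsilon$ to have a repeated $\epsilon$-root: on the bisector, two distinct ED-critical points at a common squared distance $\epsilon_0$ produce two factors of $f_\epsilon$ vanishing at $\epsilon_0$; on $\Sigma(X)$, the fibre of $\mathrm{pr}_2|_{\mathcal{E}(X)}$ over a generic point has coincident sheets by the very definition of the ramification locus, again giving a repeated $\epsilon$-root at the corresponding squared distance. Taking Zariski closures then yields both inclusions into $\Delta(X)$. The main obstacle I anticipate is making rigorous the dictionary between multiplicities in the fibres of $\mathrm{pr}_2|_{\mathcal{E}(X)}$ and orders of vanishing of $f_\epsilon(y_0)$ in $\epsilon$; this rests on irreducibility of $\mathcal{E}(X)$ and on Nagata--Zariski purity, both cited in the preceding discussion.
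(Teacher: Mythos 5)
Your proposal is correct and follows essentially the same route as the paper: the paper offers no standalone proof of Proposition~\ref{Split}, presenting it as a summary of the preceding limit analysis (the sequence $y_\delta\to y_0$ with the two families $x^1_{\delta\epsilon},x^2_{\delta\epsilon}$, compactness, and the dichotomy $x_1\neq x_2$ versus $x_1=x_2$) together with citations to \cite[Proposition 2.5]{OS20} and \cite[Proposition 2.13]{HW19}, and your forward inclusion is exactly that argument. Your explicit treatment of the reverse inclusion via repeated $\epsilon$-roots of $f_\epsilon$ is a reasonable completion that the paper leaves implicit.
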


\begin{remark}\label{finproj}
If one picks a generic point $y_0\in \Sigma(X)$, then the ED polynomial $f_{\epsilon}(y_0)$ is a polynomial in the variable $\epsilon$ having precisely ED degree many roots (including multiplicities), with at least one of them a double root. Every double root corresponds to a radius of curvature (squared) at the corresponding point $x_0$ of $X$. 
\end{remark}
\begin{definition}
Let $X$ be an algebraic variety and let $f_{\epsilon}(y)\in\mathbb{C}[y_1,\ldots,y_n][\epsilon]$ be its ED polynomial. Then for every $y_0\in \Sigma(X)$ we call all $\epsilon_0$ double roots of $f_{\epsilon}(y_0)$ \demph{radii of curvature} at corresponding points $x_0$ of $X$, where each $x_0 \in \mathrm{pr}_1(\mathrm{pr}_2^{-1}(y_0))$. Here $\mathrm{pr}_1$ and $\mathrm{pr}_2$ are as in \ref{proj}.
\end{definition}

Remember that every double root of $f_{\epsilon}(y)$ is also root of $\partial f_{\epsilon}(y_0)/\partial \epsilon.$ So for our further understanding we consider the following variety in $\mathbb{C}^n\times\mathbb{C}$ defined by the closure of
\begin{equation}\label{Def_Curv1}
\{(y,\epsilon)\in \mathbb{C}^n\times \mathbb{C}\ |\ f_{\epsilon}(y)=0, \partial f_{\epsilon}(y)/\partial \epsilon=0,\ y\in \Sigma(X)\} \setminus\{(y,\epsilon)\in \mathbb{C}^n\times \mathbb{C} \ |\ \epsilon=0\}
\end{equation}
This variety consists of pairs $(y,\epsilon)$ for which $\epsilon$ is a double root of $f_{\epsilon}(y)$ and $y\in \Sigma(X)$, so pairs of centres of curvature $y$ with corresponding non-zero radii of curvature $\epsilon$. We call this variety the \demph{curvature correspondence} of the variety $X$ and we denote it by $\mathrm{Curv}(X)$.

\begin{proposition}\label{codim2}
Generically the curvature correspondence $\mathrm{Curv}(X)$ is of codimension two in $\mathbb{C}^n\times \mathbb{C}$.
\end{proposition}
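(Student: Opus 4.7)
The plan is to project $\mathrm{Curv}(X)$ onto its first factor and apply the fibre dimension theorem, reducing the question to (a) knowing that $\Sigma(X)$ is a hypersurface in $\mathbb{C}^n$ and (b) showing that a generic fibre of the projection is zero-dimensional (in fact finite and nonempty).

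For (a), I would observe that $\Sigma(X)$ sits as a component of the offset discriminant $\Delta(X)=\{y : \mathrm{Disc}_{\epsilon}(f)(y)=0\}$, which is the vanishing locus of a single polynomial and hence a hypersurface; alternatively it is the closure of the branch locus of $\mathrm{pr}_{2}:\mathcal{E}(X)\to\mathbb{C}^n$, which by the Nagata--Zariski purity theorem quoted in the text is pure of codimension one. Either way $\dim \Sigma(X) = n-1$. For (b), consider $\pi:\mathrm{Curv}(X)\to \Sigma(X)$, $(y,\epsilon)\mapsto y$, and a generic $y_0 \in \Sigma(X)$. By Remark~\ref{finproj}, $f_{\epsilon}(y_0)$ is a nonzero univariate polynomial in $\epsilon$ of degree equal to the ED degree of $X$ and possessing at least one double root. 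The fibre $\pi^{-1}(y_0)$ consists precisely of the nonzero common zeros of $f_{\epsilon}(y_0)$ and $\partial f_{\epsilon}(y_0)/\partial\epsilon$, a nonempty finite set---nonempty because of the guaranteed double root, finite because $f_{\epsilon}(y_0)\not\equiv 0$. The fibre dimension theorem then yields $\dim \mathrm{Curv}(X) = \dim \Sigma(X) + 0 = n-1$, so the codimension in $\mathbb{C}^n\times\mathbb{C}$ is $(n+1)-(n-1)=2$, as claimed.

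The main subtlety I expect is ensuring that the projection $\pi$ truly dominates $\Sigma(X)$ and that removing the slice $\{\epsilon=0\}$ does not accidentally excise an entire irreducible component of the defining constructible set before taking closure. Both points should follow from the genericity in Remark~\ref{finproj}: a generic $y_0 \in \Sigma(X)$ is the centre of curvature at a regular point $x_0\in X$ at positive squared distance, so the associated double root $\epsilon_0$ is nonzero, and therefore $\{\epsilon=0\}$ meets the set defining $\mathrm{Curv}(X)$ only in a proper subvariety. Once this is verified, the closure of the remaining Zariski-dense open part still projects dominantly onto $\Sigma(X)$, and the finite-fibre argument above completes the proof.
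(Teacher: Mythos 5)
Your proof is correct and follows essentially the same route as the paper's: both project $\mathrm{Curv}(X)$ to the first $n$ coordinates, identify the image with the $(n-1)$-dimensional $\Sigma(X)$ (as a component of the offset discriminant), observe that the fibres are finite of cardinality bounded by the $\epsilon$-degree of $f_{\epsilon}$ (the ED degree, via Remark~\ref{finproj}), and conclude codimension two. Your extra care about dominance of the projection and about the saturation by $\{\epsilon=0\}$ not excising a whole component tightens a point the paper leaves implicit, but it is the same argument.
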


\begin{proof}
If for some $(y_0,\epsilon_0)$ we get that $f_{\epsilon_0}(y_0)=0$ and $ \partial f_{\epsilon_0}(y_0)/\partial \epsilon=0$, then it follows that $\mathrm{Res}(f_{\epsilon}(y),\partial f_{\epsilon}(y)/\partial \epsilon)(y_0)=0$, hence also $\mathrm{Disc}_{\epsilon}(y_0)=0$. And the other way around if for some $y_0$ we have that $\mathrm{Disc}_{\epsilon}(y_0)=0$, then there exists some $\epsilon_0$ (generically non-zero), such that $f_{\epsilon_0}(y_0)=0$ and $ \partial f_{\epsilon_0}(y_0)/\partial \epsilon=0$. 
So we get that projection $\mathrm{pr}$ to the first $n$ coordinates of the variety $\{(y,\epsilon)\in \mathbb{C}^n\times \mathbb{C},\text{ s.t. }f_{\epsilon}(y)=0, \partial f_{\epsilon}(y)/\partial \epsilon=0\}$ is exactly $\Delta(X)$ the offset discriminant. 

Now by the argument in~\ref{finproj} $\mathrm{pr}$ is a finite projection of degree at most the ED degree of $X$ (equal to the $\epsilon$-degree of $f_{\epsilon}(y)$, see~\cite[Theorem~$2.9$]{HW19}).
Moreover  by Proposition~\ref{Split} we have that $\Delta(X)$ is the union of two subvarieties $B(X,X)\cup \Sigma(X)$, where both $B(X,X)$ and $\Sigma(X)$ are generically $n-1$ dimensional. So the inverse image of $\Sigma(X)\subseteq \Delta(X)$ under the projection $\mathrm{pr}$ is also $n-1$ dimensional. But the inverse image of $\Sigma(X)$ under the projection $\mathrm{pr}$ is exactly $\mathrm{Curv}(X)$.
\end{proof}

\section{Critical curvature pairs}
Now we are interested in those points of the curvature correspondence, where the corresponding radius of curvature is extremal, these are points $(y,\epsilon)$ for which locally on $\mathrm{Curv}(X)\subseteq \mathbb{C}^n\times\mathbb{C}$ the $\epsilon$-direction is extremal. Suppose that the curvature correspondence of $X$ has a radical ideal $I(\mathrm{Curv}(X))$ generated by $\{g_1,\ldots,g_s\}$ and suppose that $\mathrm{Curv}(X)$ has codimension $c$ (which by Proposition~\ref{codim2} is generically equal to two).

Now we want to find all constrained critical points of the function $h(y_1,\ldots,y_n,\epsilon)=\epsilon$ with constraint $(y_1,\ldots,y_n,\epsilon)\in \mathrm{Curv}(X)$, that is with constraints $g_i(y_1,\ldots,y_n,\epsilon)=0$. We can do this by using Lagrange multipliers. This will yield that points $(y,\epsilon)$ on $\mathrm{Curv}(X)\subseteq \mathbb{C}^n\times\mathbb{C}$ for which the $\epsilon$-direction is extremal are exactly the solutions of
\[
\begin{cases}
(0,0,\ldots,0,1)+\displaystyle{\sum_{i=1}^s}\lambda_i\nabla g_i(y_1,\ldots,y_n,\epsilon)=0,\\
g_1(y_1,\ldots,y_n,\epsilon)=\ldots= g_s(y_1,\ldots,y_n,\epsilon)=0.
\end{cases}
\]
Where the upper condition is equivalent to all the $(c+1)\times (c+1)$ minors of the $(s+1)\times (n+1)$ matrix
\[\left(
\begin{array}{ccccc}
0 & 0 & \ldots &0& 1\\
\partial g_1/\partial y_1 & \partial g_1/\partial y_2& \ldots& \partial g_1/\partial y_n&\partial g_1/\partial \epsilon\\
\vdots & \vdots& \ldots& \vdots\\
\partial g_s/\partial y_1 & \partial g_s/\partial y_2& \ldots& \partial g_s/\partial y_n& \partial g_s/\partial \epsilon\\
\end{array}\right)
\] vanishing. But this boils down to all the $c\times c$ minors of the $s\times n$ matrix $\left(\partial g_i/\partial y_j\right)_{i,j}$  vanishing.

\begin{definition}\label{critcurvdeg}
The critical curvature points of the curvature correspondence are pairs $(y,\epsilon)\in\mathrm{Curv}(X)$ such that all the $c\times c$ minors of  $\left(\partial g_i/\partial y_j\right)_{i,j}$ vanish. Let us denote this variety by $\mathrm{CritCurv}(X)$ and let us call the degree of its radical ideal the \demph{critical curvature degree} of the variety $X$. 
\end{definition}

\begin{remark}
Note that there are two types of solutions to the system above. A resulting solution $(y,\epsilon)$ is either a smooth point of $\mathrm{Curv}(X)$ that corresponds indeed to a critical curvature point or any singular point of $\mathrm{Curv}(X)$ that might- or might not correspond to an actual critical curvature point of the variety.
\end{remark}

\begin{example}[Ellipse, $\mathrm{CritCurv}$ degree $4$]\label{ellipse}
Let $X$ be an ellipse in $\mathbb{C}^2$ defined by $4x_1^2+x_2^2=4$. First the algorithm below computes its ED polynomial.
	\begin{leftbar}
	\begin{verbatim}
	n=2;
	R=QQ[x_1..x_n,y_1..y_n,e];
	f=4*x_1^2+x_2^2-4;
	I=ideal(f);
	c=codim I;
	Y=matrix{{x_1..x_n}}-matrix{{y_1..y_n}};
	Jac= jacobian gens I;
	S=submatrix(Jac,{0..n-1},{0..numgens(I)-1});
	Jbar=S|transpose(Y);
	EX = I + minors(c+1,Jbar);
	SingX=I+minors(c,Jac);
	EXreg=saturate(EX,SingX);--This is the ED correspondence
	distance=Y*transpose(Y)-e;
	Offset_Correspondence=EXreg+ideal(distance);
	Off_hypersurface=eliminate(Offset_Correspondence,toList(x_1..x_n));
	G=gens Off_hypersurface;
	EDpoly=G_(0,0);
	\end{verbatim}
\end{leftbar}
The resulting output reveals that the ED polynomial is equal to
\[
f_{\epsilon}(y_1,y_2)=16y_1^8+40y_1^6y_2^2+ \ldots -360\epsilon+144,
\] a polynomial with $34$ terms, of total degree $8$ and of $\epsilon$-degree $4$ (being equal to the ED degree of the ellipse).
Now we continue by constructing its curvature correspondence $\mathrm{Curv}(X)$ by the following.
\begin{leftbar}
	\begin{verbatim}
	dif=diff(e,EDpoly);
	PreCurv=ideal(dif,EDpoly);
	Dec=decompose PreCurv
	Curv=saturate(Dec_2,ideal(e));
	\end{verbatim}
\end{leftbar}
Here $\texttt{PreCurv}$ is the variety defined by the vanishing of $f_{\epsilon}(y)$ and of $\partial f_{\epsilon}(y)/\partial \epsilon$. We decompose it and select for the curvature correspondence, $\texttt{Curv}$, the component which corresponds to the intersection with $\Sigma(X)\times \mathbb{C}$.
Or alternatively we can compute the offset discriminant and factor it, to find out the generating polynomial of $\Sigma(X)$. This goes as follows.
\begin{leftbar}
\begin{verbatim}
Disc=discriminant(EDpoly,e);
factor Disc
\end{verbatim}
\end{leftbar}
As a result we get the generating polynomials for the bisector hypersurface $B(X,X)$, which in this case is the union of the two coordinate axes, as well as $g(y)$ the generating polynomial of $\Sigma(X)$. In this case we have the following.
\begin{leftbar}
\begin{verbatim}
Curv=saturate(ideal(EDpoly, dif, g), ideal(e));
\end{verbatim}
\end{leftbar}
Now we need to construct the critical curvature points as follows.
\begin{leftbar}
	\begin{verbatim}
	JacCurv=jacobian Curv;
	M=submatrix(JacCurv,{n..2*n-1},{0..numgens(Curv)-1});
	cc=codim Curv;
	CritCurv=Curv+minors(cc,M);
	decompose radical CritCurv
	CCdeg=degree radical CritCurv
	\end{verbatim}
\end{leftbar}
As a result we get that the critical curvature degree of the ellipse is $4$ and the variety of critical curvature pairs decomposes into four real points
\[
 \{(0,-3/2,1/4), (0,3/2,1/4), (-3,0,16), (3,0,16)\},
\] representing the coordinates of the four critical centres of curvature together with the corresponding radius of curvature squared. On the figure below you can see the ellipse in red, its ED discriminant in black and the four critical osculating circles in green. 

We remark that for this variety all the points of $\mathrm{CritCurv}$ are also singular points of $\mathrm{Crit}$(X).

	\begin{figure}[h]\label{Ellipse_pic}
	\begin{center}
		\vskip -0.3cm
		\includegraphics[scale=0.35]{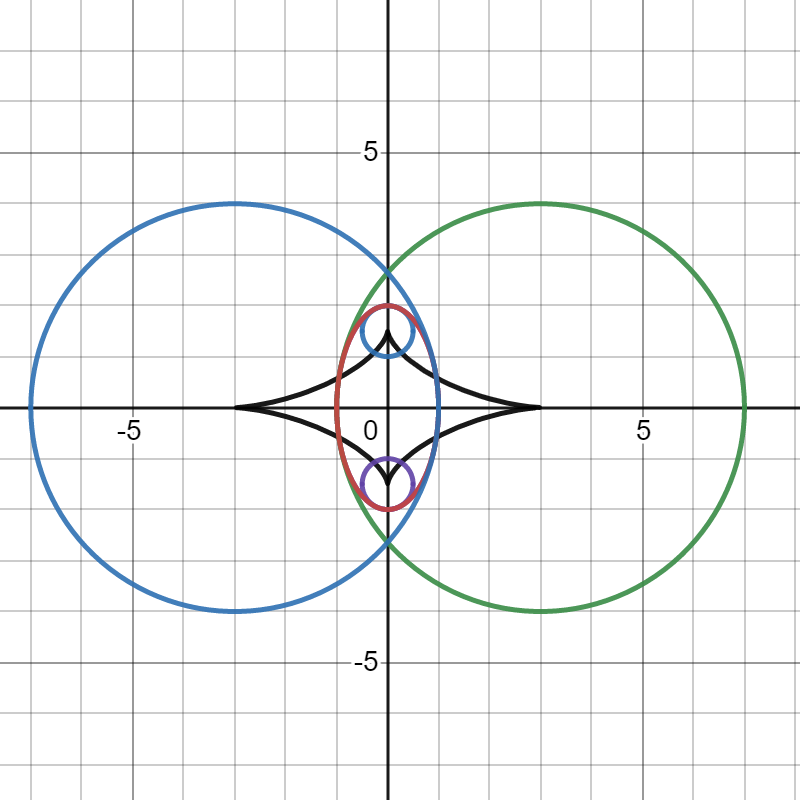}
		\caption{The evolute and four critical osculating circles of the ellipse.}
		\label{RatNorm}
	\end{center}
\end{figure}
\end{example}

\begin{remark}
	If one is interested in the points of the variety that realize these critical radii of curvature, then one could take the preimage  of the critical pairs $\mathrm{CritCurv}(X)$ under the projection $\mathrm{pr}_2:\mathcal{OC}(X)\to \mathbb{C}^n$ (see \ref{proj}). This preimage consists of pairs of points $(x,y)$, with $x\in X$ and $y$ the corresponding centre of curvature. Then taking the projection $\mathrm{pr}_1:\mathcal{OC}(X)\to \mathbb{C}^n$ to the first $n$-tuple of coordinates one gets those points of $X$ that have critical radii of curvature. So the critical curvature points of the variety $X$ are $\mathrm{pr}_1(\mathrm{pr}_2^{-1}(\mathrm{CritCurv(X)}))$.
	This is computable by the following line.
	\begin{leftbar}
		\begin{verbatim}
		decompose eliminate(CritCurv+Offset_Correspondence,toList(y_1..y_n))
		\end{verbatim}
	\end{leftbar}
\end{remark}

\begin{example}[Paraboloid, $\mathrm{CritCurv}$ degree $2$]\label{paraboloid}
In our next example we go to $\mathbb{C}^3$ and we consider the paraboloid defined by the vanishing of $f=x_3-x_1^2-x_2^2$. 
After running the computation above we find that there is a degree two imaginary curve of critical curvature points with only a single real point $(0,0,1/2,1/4)$, corresponding to the $(0,0,1/2)$ centre of the critical osculating ball of squared radius $1/4$.

We remark that for this variety as well all the points of $\mathrm{CritCurv}$ are also singular points of $\mathrm{Crit}$(X).

	\begin{figure}[h]\label{Ellipse_pic}
	\begin{center}
		\vskip -0.3cm
		\includegraphics[scale=0.4]{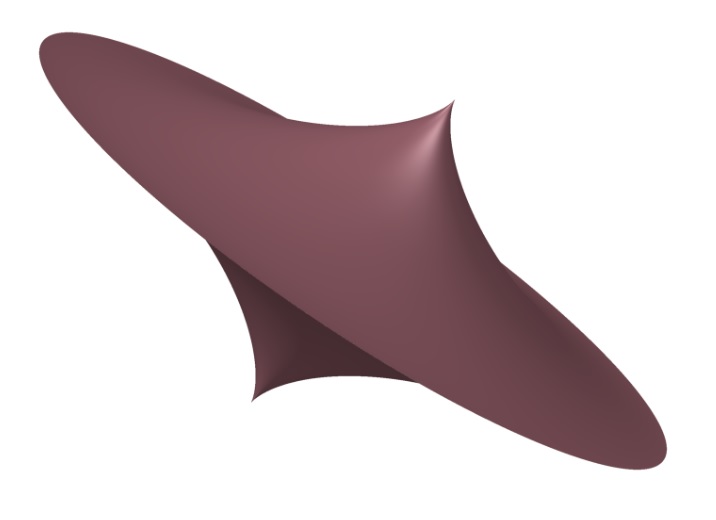}
		\vskip -0.4cm
		\caption{The ED discriminant of a symmetric ellipsoid.}
		\label{RatNorm}
	\end{center}
\end{figure}
\end{example}

\begin{example}[Ellipsoid, $\mathrm{CritCurv}$ degree $14$]\label{ellipsoid}
So far in the previous examples the real critical curvature pairs were zero dimensional regardless of the dimension of the variety. Now we consider the symmetric ellipsoid in $\mathbb{C}^3$ defined by $x_1^2+4x_2^2+4x_3^2=4$. We find that its ED discriminant is of degree $6$ and it is defined by the polynomial $64y_1^6+48y_1^4y_2^2+12y_1^2y_2^4+\ldots+243y_3^2-729$ in $16$ terms. A plot of the ED discriminant can be seen in Figure~$3$.

We find that its bisector surface is of course the  $Oy_1$ axis and the $Oy_2y_3$ coordinate plane, moreover we get that critical curvature pairs decompose as follows. Two components of degree $2$, each with a real point corresponding to one of the two cusps of the ED discriminant
\[
(-3/2,0,0,1/4)\ \text{and}\ (3/2,0,0,1/4);
\] a real circle
\[
\{(y,\epsilon)\in \mathbb{C}^3\times \mathbb{C},\text{ s.t. }\epsilon=16, y_1=0, y_2^2+y_3^2=9\},
\] corresponding to the cuspidal edge of the ED discriminant and an imaginary component 
\[\{(y,\epsilon)\in \mathbb{C}^3\times \mathbb{C},\text{ s.t. }16\epsilon^2+280\epsilon+2197=y_2^2+y_3^2+9=4y_1^2+9=0\}
\] of degree $8$. For this variety as well all the points of $\mathrm{CritCurv}$ are also singular points of $\mathrm{Crit}(X)$.
\end{example}

\begin{example}[Circle, $\mathrm{CritCurv}$ degree $2$ ]\label{circle}
So far in the previous examples the critical curvature pairs were also all singular points of $\mathrm{Curv}(X)$. Now we consider the humble circle in $\mathbb{C}^2$ defined by $x_1^2+x_2^2=1$. We find that its ED discriminant is of degree $2$ and it is defined by the polynomial $y_1^2+y_2^2=0$, so it is the isotropic quadric. We get also that critical curvature pairs are all points of the isotropic quadric with $\epsilon=1$. So $\mathrm{CritCurv}$ is of degree $2$, with one real point, that is the middle of the circle and it follows that all the points of the circle are of critical curvature.

On the other hand the singular locus of $\mathrm{CritCurv}$  only consists of the real point that is the middle of the circle with $\epsilon=1$.
\end{example}

\begin{example}[Special cuspidal cubic, $\mathrm{CritCurv}$ degree $4$]
Now we will see an example where we get a real point of $\mathrm{CritCurv}$, that is a regular point of $\mathrm{Curv}(X)$ and it corresponds to an actual critical curvature point of the variety. Our findings here are in concordance with the results in \cite{PRS21}[Section $8.2.III$]. We consider the plane curve defined by the polynomial $x_2^2-x_1^3$. We find that its ED discriminant is of degree $4$ and it is defined by the polynomial $6561y_2^4+18432y_1^3+15552y_1y_2^2+6144y_1^2+288y_2^2+512y_1$.

Moreover we find that the critical curvature pairs decompose into a real point $(0,0,4/27)$ and two imaginary components defined by
\[\{(y,\epsilon)\in \mathbb{C}^2\times \mathbb{C},\text{ s.t. }12\epsilon+1=18y_1-1=729y_2^2+64=0=0\}
\]
and by
\[\{(y,\epsilon)\in \mathbb{C}^2\times \mathbb{C},\text{ s.t. }108\epsilon+1=y_2-6y_1+1=0\}.
\]
Now interestingly the last two components are making the whole singular locus of $\mathrm{Curv}(X)$ and the one real component is actually a regular point of $\mathrm{Curv}(X)$ corresponding to a proper centre of curvature to an imaginary point on the variety. So the cuspidal cubic does not have any real critical curvature points.

On the figure below you can see the cuspidal curve in red, its ED discriminant in green.

\begin{figure}[h]\label{Ellipse_pic}
	\begin{center}
		\vskip -0.3cm
		\includegraphics[scale=0.25]{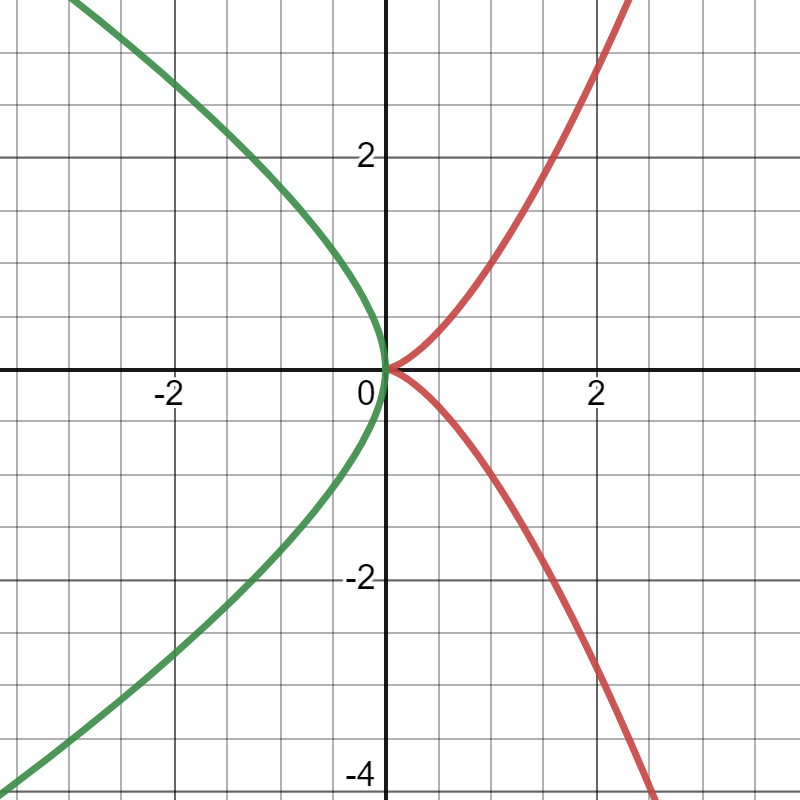}
		\caption{The cuspidal cubic and its ED discriminant.}
		\label{RatNorm}
	\end{center}
\end{figure}

\end{example}
\section{Properties of the critical curvature degree}
In this section we check some properties of the critical curvature degree of an algebraic variety. We have seen that as we defined $\mathrm{CritCurv}$ contains the singular locus of $\mathrm{Curv}$, so in order to learn about the former we have to understand the latter.

\begin{theorem}\label{Main}
Let $X\subseteq \mathbb{C}^n$ be an algebraic variety and $\mathrm{Curv}(X)$ its curvature correspondence inside $\mathbb{C}^n\times \mathbb{C}$. Let $\mathrm{pr}$ be the projection of $\mathrm{Curv}(X)$ to the first $n$ coordinates. Then we have that the preimage of the singular locus of the ED discriminant $\Sigma(X)$ under the projection $\mathrm{pr}$ is exactly the singular locus of $\mathrm{Curv}(X)$.
\end{theorem}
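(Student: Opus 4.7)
The plan is to apply the Jacobian criterion on both sides of the projection. Fix $(y_0,\epsilon_0)\in\mathrm{Curv}(X)$ with $\epsilon_0\neq 0$ and write $F(y,\epsilon):=f_\epsilon(y)$ and $F_\epsilon:=\partial F/\partial \epsilon$. Locally near $(y_0,\epsilon_0)$, on the component of $V(F,F_\epsilon)$ that projects to $\Sigma(X)$, the variety $\mathrm{Curv}(X)$ is defined by $F=F_\epsilon=0$. Since $\mathrm{Curv}(X)$ has codimension two by Proposition~\ref{codim2}, the point $(y_0,\epsilon_0)$ is singular precisely when the Jacobian
\[
J(y_0,\epsilon_0)\;=\;\begin{pmatrix}\nabla_y F(y_0,\epsilon_0) & 0\\[2pt] \nabla_y F_\epsilon(y_0,\epsilon_0) & F_{\epsilon\epsilon}(y_0,\epsilon_0)\end{pmatrix}
\]
has rank strictly less than two; the top-right entry vanishes because $F_\epsilon(y_0,\epsilon_0)=0$ on $\mathrm{Curv}(X)$.

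The heart of the argument is an identity relating $\nabla_y F(y_0,\epsilon_0)$ to the gradient of a local defining equation of $\Sigma(X)$ at $y_0$. Assume first that $F_{\epsilon\epsilon}(y_0,\epsilon_0)\neq 0$ and that $y_0\notin B(X,X)$. By the implicit function theorem, $F_\epsilon(y,\epsilon)=0$ can be solved locally as $\epsilon=\epsilon(y)$, and the relevant branch of $V(F,F_\epsilon)$ is the graph of $\epsilon(\cdot)$ over the zero locus of $g(y):=F(y,\epsilon(y))$. The chain rule together with $F_\epsilon(y_0,\epsilon_0)=0$ yields
\[
\nabla_y g(y_0)\;=\;\nabla_y F(y_0,\epsilon_0)+F_\epsilon(y_0,\epsilon_0)\,\nabla_y\epsilon(y_0)\;=\;\nabla_y F(y_0,\epsilon_0).
\]
Since the remaining roots of $F(y_0,\cdot)$ are simple and distinct from $\epsilon_0$, the reduced local equation of $\Sigma(X)=V(\mathrm{Disc}_\epsilon F)$ at $y_0$ coincides with $g$ up to a unit, so $\nabla_y\mathrm{Disc}_\epsilon F(y_0)$ is a nonzero scalar multiple of $\nabla_y F(y_0,\epsilon_0)$.

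Both implications of the theorem now follow at once. Under the standing assumption $F_{\epsilon\epsilon}(y_0,\epsilon_0)\neq 0$, the second row of $J$ has a nonzero last entry, so $\mathrm{rk}\,J<2$ forces the top row to vanish, i.e.\ $\nabla_y F(y_0,\epsilon_0)=0$; by the identity above this is equivalent to $\nabla_y\mathrm{Disc}_\epsilon F(y_0)=0$, that is, to $y_0\in\mathrm{Sing}(\Sigma(X))$. The converse is the same calculation read backwards.

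The main obstacle is to treat the two degenerate situations in which the proportionality constant above could vanish: (i)~when $F_{\epsilon\epsilon}(y_0,\epsilon_0)=0$, i.e.\ $\epsilon_0$ is a root of $F(y_0,\cdot)$ of multiplicity at least three, and (ii)~when $y_0\in\Sigma(X)\cap B(X,X)$, so the local equation of $\Sigma(X)$ is only a proper factor of $\mathrm{Disc}_\epsilon F$. In case (i) one shows that $\mathrm{Disc}_\epsilon F$ vanishes to order at least three at $y_0$, so $y_0\in\mathrm{Sing}(\Sigma(X))$ automatically, and simultaneously the last column of $J$ is zero, so the codimension-two condition for $\mathrm{Curv}(X)$ together with a higher-order expansion of $F$ in $\epsilon$ forces the matching singularity at $(y_0,\epsilon_0)$. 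In case (ii) one replays the implicit function argument on the $\Sigma(X)$-factor of $\mathrm{Disc}_\epsilon F$ rather than the full discriminant. I expect case~(i) to be the more subtle of the two, since it is where the implicit function step itself breaks down and one must expand $F$ to higher order in $\epsilon$ to recover the comparison of ranks.
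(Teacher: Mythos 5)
Your chain-rule identity $\nabla_y g(y_0)=\nabla_y F(y_0,\epsilon_0)$ is correct, but the step it feeds into --- ``the reduced local equation of $\Sigma(X)$ at $y_0$ coincides with $g$ up to a unit'' --- is false, and this breaks both implications. The two roots of $F(y_0,\cdot)$ that collide at $\epsilon_0$ are the critical \emph{values} of the distance function at two colliding critical points on $X$; for a generic fold their difference vanishes to order $3/2$ in the reduced local equation $b$ of $\Sigma(X)$, so $g(y)=F(y,\epsilon(y))$ vanishes to order $3$ along $\Sigma(X)$, not to order $1$. Equivalently --- and this is the first observation in the paper's proof --- every point of $\Sigma(X)$ is a \emph{singular} point (classically a cusp) of the corresponding offset hypersurface, so $\nabla_y F$ vanishes identically on $\mathrm{Curv}(X)$. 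Plugged into your own identity this gives $\nabla_y g\equiv 0$ along $\Sigma(X)$, so your matrix $J$ has rank at most $1$ at \emph{every} point of $\mathrm{Curv}(X)$ and your criterion would declare every point singular. The underlying problem is that $(F,F_\epsilon)$ is not a radical ideal along the component lying over $\Sigma(X)$ (locally it is of the form $(\epsilon-\epsilon(y),\,b^3)$ rather than $(\epsilon-\epsilon(y),\,b)$), so the Jacobian criterion applied to these two generators does not compute the singular locus of $\mathrm{Curv}(X)$.

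The paper avoids this by taking the definition \ref{Def_Curv1} of $\mathrm{Curv}(X)$ at face value: the condition $y\in\Sigma(X)$ means the reduced polynomial $g$ of $\Sigma(X)$ is a third generator. In the resulting $3\times(n+1)$ Jacobian the row $(\nabla_y f_\epsilon,\partial f_\epsilon/\partial\epsilon)$ is identically zero on $\mathrm{Curv}(X)$ by the cusp observation above, and the singular locus is cut out by the $2\times 2$ minors of the two remaining rows, $\nabla(\partial f_\epsilon/\partial\epsilon)$ and $(\nabla g,0)$. The forward implication is then immediate, and the converse uses an ingredient absent from your proposal: if some $\partial g/\partial y_i\neq 0$, the vanishing of the minor $\frac{\partial g}{\partial y_i}\cdot\frac{\partial^2 f_\epsilon}{\partial\epsilon^2}$ forces $\epsilon_0$ to be an at least triple root, which places $y_0$ in the ramification locus of $\mathrm{pr}_2$ restricted over $\Sigma(X)$ and hence in $\mathrm{Sing}(\Sigma(X))$. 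To repair your argument you would need to run the Jacobian criterion on the generators $(f_\epsilon,\partial f_\epsilon/\partial\epsilon,g)$, or equivalently on the reduced local model $(\epsilon-\epsilon(y),b)$, rather than on $F$ and $F_\epsilon$ alone; your cases (i) and (ii) are then side issues compared to this.
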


\begin{remark}
	For plane curves it is well known, that extremal curvature points are singular points of the evolute, generically cusps (see \cite{Li19}). Previously this result was only observed for varieties up to surfaces in $\mathbb{R}^4$ in~\cite{Port}.
\end{remark}

\begin{proof}[Proof of Theorem~\ref{Main}:]
The curvature correspondence is defined by~\ref{Def_Curv1}. Let us denote the irreducible defining polynomial of the ED discriminant $\Sigma(X)$ by $g$, then the Jacobian of the defining ideal of $\mathrm{Curv}$ (before saturation by $\epsilon=0$) is the following
\[\left(
\begin{array}{ccccc}
\partial f_{\epsilon}(y)/\partial y_1 & \partial f_{\epsilon}(y)/\partial y_2& \ldots& \partial f_{\epsilon}(y)/\partial y_n&\partial f_{\epsilon}(y)/\partial \epsilon\\
\partial^2 f_{\epsilon}(y)/\partial \epsilon\partial y_1 & \partial^2 f_{\epsilon}(y)/\partial \epsilon\partial y_2& \ldots& \partial^2 f_{\epsilon}(y)/\partial \epsilon\partial y_n&\partial^2 f_{\epsilon}(y)/\partial \epsilon^2\\
\partial g/\partial y_1 & \partial g/\partial y_2& \ldots& \partial g/\partial y_n& 0\\
\end{array}\right).
\]
First observe that every point $y_0$ on the ED discriminant is also a singular point of the corresponding offset hypersurface $\mathcal{O}_{\epsilon}$. This is true because $\mathrm{pr}_2$ (form \ref{proj}) is not of full rank over these ramification points. So for any $y_0\in\Sigma(X)$ we have that $\frac{\partial f_{\epsilon}(y)}{\partial y_i}|_{y=y_0}=0$, for all $i\in\{1,\ldots,n\}$.  We recall additionally that for any point $(y_0,\epsilon_0)\in\mathrm{Crit}(X)$ we have $\frac{\partial f_{\epsilon}(y)}{\partial \epsilon}|_{y=y_0,\epsilon=\epsilon_0}=0$ and that $\mathrm{Curv}(X)$ is of codimension $2$.

This means that the singular locus of $\mathrm{Curv}(X)$ is defined additionally by the vanishing of all the $2\times 2$ minors of 
\begin{equation}\label{jac}\left(
\begin{array}{ccccc}
\partial^2 f_{\epsilon}(y)/\partial \epsilon\partial y_1 & \partial^2 f_{\epsilon}(y)/\partial \epsilon\partial y_2& \ldots& \partial^2 f_{\epsilon}(y)/\partial \epsilon\partial y_n&\partial^2 f_{\epsilon}(y)/\partial \epsilon^2\\
\partial g/\partial y_1 & \partial g/\partial y_2& \ldots& \partial g/\partial y_n& 0\\
\end{array}\right).
\end{equation}

Now if $y_0$ is a singular point of $\Sigma(X)$, then $\frac{\partial g}{\partial y_i}|_{y=y_0}=0$, for all $i\in\{1,\ldots,n\}$ and hence all the $2\times 2$ minors of \ref{jac} are zero, so the corresponding $(y_0,\epsilon_0)$ is a singular point of $\mathrm{Curv}(X)$.

The other way around suppose by the contrary that $(y_0,\epsilon_0)$ is a singular point of $\mathrm{Curv}(X)$ such that $y_0$ is not a singular point of $\Sigma(X)$. So there exist some $i\in\{1,\ldots,n\}$, such that $\frac{\partial g}{\partial y_i}|_{y=y_0}\neq 0,$ but then we sill have that $\frac{\partial g}{\partial y_i}\cdot\frac{\partial^2 f_{\epsilon}(y)}{\partial \epsilon^2}=0$, so it follows that $\frac{\partial^2 f_{\epsilon}(y)}{\partial \epsilon^2}$ must be zero. So this means that $y_0$ is such that $f_{\epsilon}(y_0)$ has an at least triple root in $\epsilon$, but that means that $y_0$ is in the ramification locus of  $\mathrm{pr}_2$ (form \ref{proj}) restricted to $\Sigma(X)$, hence $y_0$ is a singular point of $\Sigma(X)$, leading to a contradiction. 
\end{proof}
\begin{corollary}
Let $X\subseteq \mathbb{C}^n$ be an algebraic variety and $\mathrm{CritCurv}(X)$ its critical curvature pairs variety, then we get that
\[\mathrm{deg}(\mathrm{CritCurv}(X))\geq \mathrm{deg}(\mathrm{Sing}(\Sigma(X))).\]
\end{corollary}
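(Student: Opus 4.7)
The plan is to chain together three ingredients: an elementary containment, the identification provided by Theorem~\ref{Main}, and a degree comparison under a birational finite projection.

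First, I would observe that by the remark following Definition~\ref{critcurvdeg} (and restated in the preamble to Section~$4$), the variety $\mathrm{CritCurv}(X)$ contains $\mathrm{Sing}(\mathrm{Curv}(X))$. Indeed, if $(y_0,\epsilon_0)$ is singular on $\mathrm{Curv}(X)$, the full Jacobian $(\partial g_i/\partial y_j,\,\partial g_i/\partial \epsilon)$ drops rank below $c$ there, so in particular every $c\times c$ minor of the $y$-submatrix $(\partial g_i/\partial y_j)$ vanishes, which is exactly the defining condition of $\mathrm{CritCurv}(X)$. Hence $\deg(\mathrm{CritCurv}(X))\ge \deg(\mathrm{Sing}(\mathrm{Curv}(X)))$.

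Next, I would apply Theorem~\ref{Main} to identify $\mathrm{Sing}(\mathrm{Curv}(X))$ with $\mathrm{pr}^{-1}(\mathrm{Sing}(\Sigma(X)))$, where $\mathrm{pr}:\mathrm{Curv}(X)\to \Sigma(X)$ is the projection to the first $n$ coordinates. The task then reduces to showing $\deg(\mathrm{pr}^{-1}(\mathrm{Sing}(\Sigma(X)))) \ge \deg(\mathrm{Sing}(\Sigma(X)))$. Here I would use that $\mathrm{pr}$ is finite and birational onto $\Sigma(X)$: by Remark~\ref{finproj}, at a generic point $y_0\in\Sigma(X)$ the polynomial $f_\epsilon(y_0)$ has a single double root, so the fiber $\mathrm{pr}^{-1}(y_0)$ is a single pair $(y_0,\epsilon_0)$. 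Under such a finite birational map, a generic linear slice cutting $\mathrm{Sing}(\Sigma(X))$ in $\deg(\mathrm{Sing}(\Sigma(X)))$ reduced points pulls back to at least that many points in $\mathrm{pr}^{-1}(\mathrm{Sing}(\Sigma(X)))$, yielding the desired inequality. Chaining the three estimates concludes the argument.

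The main obstacle I anticipate is the last step, the degree comparison across $\mathrm{pr}$, because $\mathrm{Sing}(\Sigma(X))$ need not be pure-dimensional and the projection could in principle collapse or split components in unexpected ways. I would handle this by interpreting $\deg$ as the sum of degrees of top-dimensional components (consistent with the ideal-theoretic degree used in the Macaulay2 examples of Section~$3$) and by invoking birationality of $\mathrm{pr}$ to guarantee, for every top-dimensional component of $\mathrm{Sing}(\Sigma(X))$, a dominant component of $\mathrm{pr}^{-1}(\mathrm{Sing}(\Sigma(X)))$ of matching dimension and degree at least as large. Any extra higher-dimensional components that appear in the preimage over deeper strata can only increase the degree, never decrease it.
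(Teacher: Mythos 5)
Your proposal follows essentially the same route as the paper: the paper's proof likewise combines the containment $\mathrm{Sing}(\mathrm{Curv}(X))\subseteq\mathrm{CritCurv}(X)$ with Theorem~\ref{Main}, noting that distinct points of $\mathrm{Sing}(\Sigma(X))$ correspond to distinct points of $\mathrm{Sing}(\mathrm{Curv}(X))$ so that the degree cannot drop under the correspondence. Your version merely makes the final degree-comparison step (via finiteness and generic birationality of $\mathrm{pr}$) more explicit than the paper does.
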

\begin{proof}
Observe that distinct points in the singular locus of $\Sigma(X)$ correspond to distinct points in the singular locus of $\mathrm{Curv}(X)$, so this means that the degree of the singular locus of $\mathrm{Curv}(X)$ is greater or equal to the degree of th singular locus of $\Sigma(X)$. Also remember that  $\mathrm{CritCurv}$ contains the singular locus of $\mathrm{Curv}$, so the claim follows.
\end{proof}

\begin{remark}
	For closed, smooth plane curves it is well knows that there are at least four local extrema of the curvature function (known as the Four-vertex theorem), so this case translates to $\mathrm{deg}(\mathrm{CritCurv(X))}\geq 4$. An extension of this is presented in \cite[Theorem $4.6$]{MadMad} for smooth, irreducible algebraic curves of degree $d\geq 3$, stating that $\mathrm{deg}(\mathrm{CritCurv(X)})\geq 6d^2-10d$. A further step is developed in \cite{BRW22}[Theorem $3.2$], where the authors fully characterize the critical curvature points for general quadrics in three-space.
\end{remark}

\noindent
{\bf Acknowledgements.} The author was supported by Sapientia Foundation - Institute for Scientific Research, Romania, Project No. $17/11.06.2019.$

	\vspace{1cm}
\footnotesize {\bf Authors' address:}

\smallskip

\noindent Emil Horobe\c{t}, Sapientia Hungarian University of Transylvania \ 
\hfill {\tt horobetemil@ms.sapientia.ro}

\end{document}